\title{On the uniqueness of collections of pennies and marbles}
\author{Sean Dewar \and Georg Grasegger \and Kaie Kubjas \and Fatemeh Mohammadi \and Anthony Nixon}
\date{}
\theoremstyle{plain}
\newtheorem*{theorem*}{Theorem}
\newtheorem{theorem}{Theorem}
\numberwithin{theorem}{section}
\newtheorem{lemma}[theorem]{Lemma}
\theoremstyle{definition}
\newtheorem{definition}[theorem]{Definition}
\newcommand{\R}{\mathbb{R}}
\newcommand{\edge}[2]{#1#2}
\newcounter{subfig}
\colorlet{colbg}{white}
\colorlet{colfg}{black}
\colorlet{colgraphv}{colfg!75!white}
\colorlet{colgraphe}{colfg!55!white}
\colorlet{colG}{DarkSeaGreen}
\definecolor{colR}{HTML}{CC6677}
\definecolor{colO}{HTML}{DDCC77}
\definecolor{colB}{HTML}{6699CC}
\colorlet{colY}{Gold!90!black}
\tikzstyle{vertex}=[fill=colgraphv,circle,inner sep=0pt, minimum size=4pt]
\tikzstyle{edge}=[line width=1.5pt,colgraphe]
\tikzstyle{backbone}=[edge,colR]
\tikzstyle{penny}=[colY,fill=colY!25!white] 
\tikzstyle{pennyg}=[colY!50!colR,fill=colY!50!colR!25!white] 
\tikzstyle{disk}=[dashed,colB,fill=colB!20!white,opacity=0.25] 
\tikzstyle{nodisk}=[dashed,colR,fill=colR!20!white,opacity=0.25]
\tikzstyle{labelsty}=[font=\scriptsize]
\tikzstyle{vertex2}=[fill=blue,circle,inner sep=0pt, minimum size=4pt]
\tikzstyle{env}=[rounded rectangle,minimum height=0.65cm,inner sep=0.3333em,align=left,minimum width=1cm,rounded corners=0pt,font=\small]
\tikzstyle{env1}=[env,rounded rectangle right arc=none,fill=colfg!50!colbg]
\tikzstyle{env2}=[env,rounded rectangle left arc=none]
\begin{document}

\maketitle
\begin{abstract}
In this note we study the uniqueness problem for collections of pennies and marbles. More generally, consider a collection of unit $d$-spheres that may touch but not overlap. Given the existence of such a collection, one may analyse the contact graph of the collection. In particular we consider the uniqueness of the collection arising from the contact graph. Using the language of graph rigidity theory, we prove a precise characterisation of uniqueness (global rigidity) in dimensions 2 and 3 when the contact graph is additionally chordal. We then illustrate a wide range of examples in these cases. That is, we illustrate collections of marbles and pennies that can be perturbed continuously (flexible), are locally unique (rigid) and are unique (globally rigid). We also contrast these examples with the usual generic setting of graph rigidity.
\end{abstract}

\section{Introduction}

Take a collection of same-size pennies and arrange them on a table. The contact graph of your collection has a vertex for each penny and there are edges precisely when the corresponding pennies touch. These so-called penny graphs are also known as unit coin graphs, minimum-distance graphs, smallest-distance graphs and closest-pair graphs in the wider literature \cite{EadesWhitesides,Hlineny}.
A well-studied but computationally difficult problem is determining which graphs can occur in this way; for example, it is NP-hard even for the special case of trees \cite{bowen2015realization}. See \Cref{fig:penny} for two examples of penny graphs: one with its corresponding penny graph representation and another with a non-penny realisation.

\begin{figure}[ht]
    \centering
    \begin{tikzpicture}[scale=1]
        \node[vertex] (a) at (0,0) {};
        \node[vertex] (b) at (1,0) {};
        \node[vertex,rotate around={60:(a)}] (d) at (b) {};
        \node[vertex] (c) at ($(d)+(1,0)$) {};
        \begin{scope}[on background layer]
            \foreach\n in {a,b,c,d}{\draw[penny] (\n) circle[radius=0.5cm];}
        \end{scope}
        \draw[edge] (a)edge(b) (b)edge(c) (c)edge(d) (d)edge(a) (b)edge(d);
        \node[colG] at ($(a)!0.5!(b)-(0,0.75)$) {\faCheck};
    \end{tikzpicture}
    \qquad
    \begin{tikzpicture}[scale=1]
        \node[vertex] (a) at (0,0) {};
        \node[vertex] (b) at (1,0) {};
        \node[vertex,rotate around={50:(a)}] (d) at (b) {};
        \begin{scope}[on background layer]
            \foreach\n in {a,b,d}{\draw[penny,opacity=0.5] (\n) circle[radius=0.5cm];}
        \end{scope}
        \draw[edge] (a)edge(b) (d)edge(a);
        \node[colR] at ($(a)!0.5!(b)-(0,0.75)$) {\faTimes};
    \end{tikzpicture}
    \caption{A graph with a penny realisation and a graph where the realisation does not fulfill the penny condition.}
    \label{fig:penny}
\end{figure}
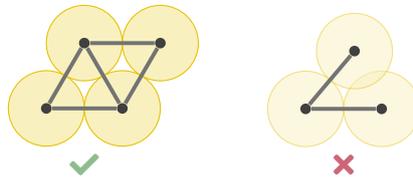

In this article we study the corresponding uniqueness problem. That is, given such a collection of pennies, one may always translate or rotate the entire collection, but are there other ways the shape of the collection can change, locally or globally?
We use rigidity theory to provide insight into this question and the corresponding 3-dimensional question for marbles. One motivation for this is the potential applications. Two examples are understanding the form and function of colloidal matter, see \cite{HolmesCerfon} for details, and identifiability in 3D genome reconstruction  \cite{cifuentes20233d}.

The basic objects of rigidity theory are \emph{frameworks}; pairs $(G,\rho)$ consisting of a graph $G=(V,E)$ and a map $\rho:V\rightarrow \mathbb{R}^d$. Two frameworks $(G,\rho)$ and $(G,\rho')$ in $\mathbb{R}^d$ are said to be \emph{equivalent} if for all $e\in E$, the length of $e$ in $(G,\rho)$ and $(G,\rho')$ is the same. More strongly, $(G,\rho)$ and $(G,\rho')$ are \emph{congruent} if $\rho$ can be obtained from $\rho'$ by applying a Euclidean isometry of $\mathbb{R}^d$.
The framework $(G,\rho)$ in $\mathbb{R}^d$ is \emph{rigid} if every equivalent framework $(G,\rho')$ in $\mathbb{R}^d$, with $\rho'$ in an open neighbourhood of $\rho$, is congruent to $(G,\rho)$. Moreover, $(G,\rho)$ in $\mathbb{R}^d$ is \emph{globally rigid} if every equivalent framework $(G,\rho')$ in $\mathbb{R}^d$ is congruent to $(G,\rho)$.
If a framework is not rigid, it is said to be \emph{flexible}.

Both the rigidity (assuming $d>1$) and the global rigidity of a framework
in $\mathbb{R}^d$ depend on the choice of realisation as well as the graph. However, if we restrict to generic frameworks, where $(G,\rho)$ is \emph{generic} if the set of coordinates of $\rho$ forms an algebraically independent set over $\mathbb{Q}$, then both properties depend only on the underlying graph \cite{AsimowRoth,GortlerHealyThurston}.
Because of this, we say a graph $G$ is \emph{generically (globally) rigid in $\mathbb{R}^d$} if any $d$-dimensional generic framework with graph $G$ is (globally) rigid.

In \Cref{sec:prelim}, we introduce the notions of rigidity and global rigidity for penny graphs. Since there is little additional complication, we work with the $d$-dimensional analogue of penny graphs, here called \emph{$d$-sphere graphs}, wherein pennies are replaced with $d$-dimensional unit spheres. Our first main contribution is \Cref{l:chordal}, which gives a complete characterisation of global rigidity for $d$-sphere graphs that are additionally chordal. The second main contribution of this note is the collection of examples presented in \Cref{sec:exam}. These examples, covering all but one possibility which we leave as an open problem, compare natural rigidity concepts for penny and marble graphs to their counterparts in generic rigidity theory.

\section{Rigid and globally rigid sphere graphs}
\label{sec:prelim}

We first introduce the fundamental object of study.

\begin{definition}
A \emph{$d$-sphere graph} is the contact graph of a collection of unit $d$-dimensional spheres with non-overlapping interiors, i.e., there exists a realisation $\rho\colon V\rightarrow \R^d$ such that
\begin{align*}
    \edge{v}{w}\in E \Rightarrow ||\rho(v)-\rho(w)||=1\\
    \edge{v}{w}\not\in E \Rightarrow ||\rho(v)-\rho(w)||>1;
\end{align*}
such a realisation $\rho$ is said to be a \emph{$d$-sphere graph realisation} of $G$. We 
use the terms penny graph and marble graph for the 2- and 3-dimensional cases respectively.
\end{definition}

We illustrate basic examples of rigid and flexible penny graphs in~\Cref{fig:rigflex-penny}.
This motivates the formal definition that follows.

\begin{figure}[ht]
    \centering
    \begin{tikzpicture}[scale=1]
        \node[vertex] (a) at (0,0) {};
        \node[vertex] (b) at (1,0) {};
        \node[vertex,rotate around={60:(a)}] (d) at (b) {};
        \node[vertex] (c) at ($(d)+(1,0)$) {};
        \begin{scope}[on background layer]
            \foreach\n in {a,b,c,d}{\draw[penny] (\n) circle[radius=0.5cm];}
        \end{scope}
        \draw[edge] (a)edge(b) (b)edge(c) (c)edge(d) (d)edge(a) (b)edge(d);
        \node[] at ($(a)!0.5!(b)-(0,0.75)$) {rigid};
    \end{tikzpicture}
    \qquad
    \begin{tikzpicture}[scale=1]
        \node[vertex] (a) at (0,0) {};
        \node[vertex] (b) at (1,0) {};
        \node[vertex,rotate around={90:(a)}] (d) at (b) {};
        \node[vertex] (c) at ($(d)+(1,0)$) {};
        \begin{scope}[on background layer]
            \foreach\n in {a,b,c,d}{\draw[penny] (\n) circle[radius=0.5cm];}
        \end{scope}
        \draw[edge] (a)edge(b) (b)edge(c) (c)edge(d) (d)edge(a);
        \node[] at ($(a)!0.5!(b)-(0,0.75)$) {flexible};
    \end{tikzpicture}
    \begin{tikzpicture}[scale=1]
        \node[vertex] (a) at (0,0) {};
        \node[vertex] (b) at (1,0) {};
        \node[vertex,rotate around={80:(a)}] (d) at (b) {};
        \node[vertex] (c) at ($(d)+(1,0)$) {};
        \begin{scope}[on background layer]
            \foreach\n in {a,b,c,d}{\draw[penny] (\n) circle[radius=0.5cm];}
        \end{scope}
        \draw[edge] (a)edge(b) (b)edge(c) (c)edge(d) (d)edge(a);
        \node[] at ($(a)!0.5!(b)-(0,0.75)$) {flexible};
    \end{tikzpicture}
    \caption{Rigid and flexible penny realisations of graphs.}
    \label{fig:rigflex-penny}
\end{figure}
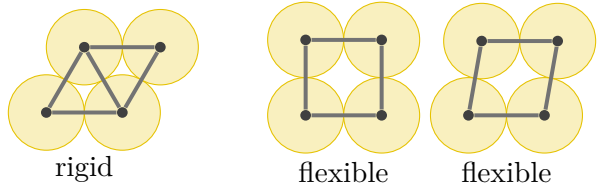

\begin{definition}
    Let $G$ be a $d$-sphere graph.
    We say that $G$ is \emph{$d$-sphere-rigid} if there exist finitely many $d$-sphere realisations modulo isometries.
    We say that $G$ is \emph{globally $d$-sphere-rigid} if there exists exactly one $d$-sphere graph realisation modulo isometries.
\end{definition}

Again we use (globally) penny-rigid and (globally) marble-rigid for the low-dimensional cases. 
An example of a globally penny-rigid graph is given in \Cref{fig:pennygrid}.

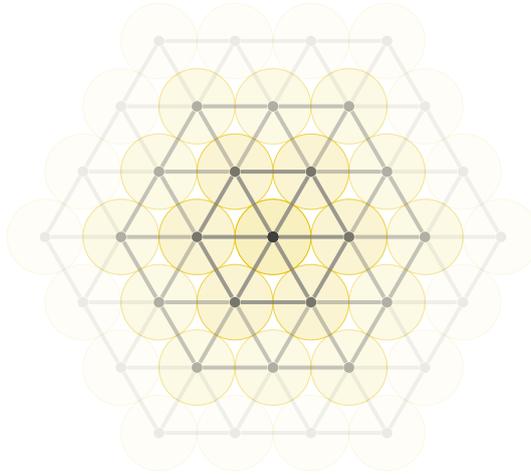
\begin{figure}[ht]
    \centering
    \begin{tikzpicture}
        \foreach \w in {1,2,...,6}
        {
            \node[vertex] (a0\w) at (0,0) {};
        }
        \begin{scope}[on background layer]
            \draw[penny] (a01) circle[radius=0.5cm];
        \end{scope}
        \foreach \level [%
            remember=\level as \levelo (initially 0),%
            evaluate=\level as \b using {int(6*\level)},%
            evaluate=\level as \step using {int(\level+1)},%
            evaluate=\level as \op using 1-3*\level/10%
        ] in {1,2,3}
        {
            \begin{scope}[opacity=\op]
                \foreach \w [evaluate=\w as \k using {int(mod(\w-1,\level))},evaluate=\w as \ww using {int(floor((\w-1)/\level)+1)}] in {1,2,...,\b}
                {
                    \node[vertex] (a\level\w) at ($(60*\ww:\level)+\k*(120+60*\ww:1)$) {};
                    \begin{scope}[on background layer]
                        \draw[penny,opacity=\op] (a\level\w) circle[radius=0.5cm];
                    \end{scope}
                }
                \foreach \w [remember=\w as \wo (initially \b)] in {1,2,...,\b}
                {
                    \draw[edge] (a\level\w)edge(a\level\wo);
                }
                \foreach \w [count=\i,evaluate=\w as \wo using {int(1+\levelo*(\i-1))}] in {1,\step,...,\b}
                {
                    \draw[edge] (a\level\w)edge(a\levelo\wo);
                    \ifnum\level>1
                        \foreach \k [%
                            evaluate=\k as \wk using {int(\w+\k)},%
                            evaluate=\k as \wok using {int(mod(\wo+\k-1,6*\levelo)+1)},%
                            evaluate=\k as \wokm using {int(\wo+\k-1)}
                        ] in {1,...,\levelo}
                        {
                            \draw[edge] (a\level\wk)edge(a\levelo\wok);
                            \draw[edge] (a\level\wk)edge(a\levelo\wokm);
                        }
                    \fi
                } 
            \end{scope}
        }
    \end{tikzpicture}
    \caption{A grid of pennies, which illustrates a family of examples of globally penny-rigid graphs.}
    \label{fig:pennygrid}
\end{figure}

Note that a $d$-sphere graph is $d$-sphere-rigid if and only if every $d$-sphere realisation of the graph is a rigid framework.
On the other hand, let $G$ be a $d$-sphere graph.
If there exists a $d$-sphere realisation of $G$ which is, as a framework, globally rigid, then $G$ is globally $d$-sphere rigid; however the converse is not true,
as shown later in the paper.

\begin{lemma}\label{lem:rotate}
    Let $G=(V,E)$ be a graph such that $G= G_1 \cup G_2$ where $G_1=(V_1,E_1)$, $G_2=(V_2,E_2)$ and $|V_1 \cap V_2| \leq d-1$.
    Let $(G,\rho)$ be a framework in $\mathbb{R}^d$ so that:
    \begin{enumerate}
        \item\label{p:rotate1} the points $\{\rho (v) : v \in V_1 \cap V_2\}$ are affinely independent,
        and
        \item\label{p:rotate2} for each $i \in \{1,2\}$, the affine span of the points $\{\rho (v) : v \in V_i\}$ has dimension at least $d-1$.
    \end{enumerate}
    Then $(G,\rho)$ is flexible.
\end{lemma}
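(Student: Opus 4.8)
The plan is to exhibit an explicit non-trivial continuous flex of $(G,\rho)$ obtained by rigidly rotating the part $G_2$ about the shared vertices while keeping $G_1$ fixed. First I would record the dimension count that makes this possible: since the points $\{\rho(v):v\in V_1\cap V_2\}$ are affinely independent (condition~\ref{p:rotate1}) and number at most $d-1$, their affine hull $A$ has dimension at most $d-2$; hence the orthogonal complement of the direction space of $A$ has dimension at least $2$. Consequently there is a non-trivial one-parameter family $(R_t)_{t\in\R}$ of rotations of $\R^d$ with $R_0=\mathrm{id}$, each fixing $A$ pointwise. (If $V_1\cap V_2=\emptyset$ one simply fixes an arbitrary point and takes any non-trivial rotation about it.) I would then define the family of frameworks $(G,\rho_t)$ by $\rho_t(v)=\rho(v)$ for $v\in V_1$ and $\rho_t(v)=R_t(\rho(v))$ for $v\in V_2$. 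This is well defined because every shared vertex $v\in V_1\cap V_2$ has $\rho(v)\in A$, which $R_t$ fixes.

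Checking equivalence is immediate: each edge of $E=E_1\cup E_2$ has both endpoints in a single $V_i$, and on $V_1$ the map $\rho_t$ agrees with $\rho$ while on $V_2$ it is the composition of $\rho$ with the isometry $R_t$; in either case edge lengths are preserved. Since $\rho_t\to\rho$ as $t\to0$, it then suffices to show that $(G,\rho_t)$ is not congruent to $(G,\rho)$ for all small $t\neq0$. For this I would use the classical fact that two point configurations in $\R^d$ are congruent precisely when they have identical pairwise distance matrices. Distances between two vertices lying in a common $V_i$ are preserved by construction (and distances involving a shared vertex are preserved since $R_t$ is an isometry fixing $A$), so the only distances that can detect the flex are those between some $u\in V_1\setminus V_2$ and some $w\in V_2\setminus V_1$.

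The main obstacle is the non-triviality of the flex: a single rotation $R_t$ might fail to move $w$, or its effect might be invisible to $u$, and I must rule this out using condition~\ref{p:rotate2}. Let $\pi$ denote orthogonal projection onto the orthogonal complement of the direction space of $A$ (based at a point of $A$). By condition~\ref{p:rotate2} the affine spans of $\rho(V_1)$ and of $\rho(V_2)$ both have dimension at least $d-1$, strictly larger than $\dim A$, so neither $V_1$ nor $V_2$ maps into $A$; this yields vertices $u\in V_1\setminus V_2$ and $w\in V_2\setminus V_1$ with $\pi(\rho(u))\neq0$ and $\pi(\rho(w))\neq0$. I then have freedom in choosing the $2$-plane $P$ of rotation inside the (at least $2$-dimensional) complement of $A$, and I would choose it so that the projections onto $P$ of both $\pi(\rho(u))$ and $\pi(\rho(w))$ are non-zero; since these are non-zero vectors in that complement, such a $P$ exists — for instance $P=\operatorname{span}(\pi(\rho(u)),x)$ with $x$ chosen so that $\langle\pi(\rho(w)),x\rangle\neq0$. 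With this choice a direct computation expresses $\|\rho_t(u)-\rho_t(w)\|^2$ as a constant plus a term of the form $-2\langle p_u,R_tp_w\rangle$, where $p_u,p_w$ are the non-zero $P$-components of $\rho(u),\rho(w)$. As $p_u,p_w$ are non-zero vectors in the plane $P$ and $R_t$ rotates within $P$, this term is a genuinely non-constant sinusoid in $t$, so it differs from its value at $t=0$ for every small $t\neq0$. Hence the distance between $u$ and $w$ changes, $(G,\rho_t)$ is not congruent to $(G,\rho)$, and $(G,\rho)$ is flexible.
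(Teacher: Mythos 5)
Your proof is correct and follows essentially the same route as the paper, which simply notes that the result is folklore and proved ``by applying a rotation to $(G_1,\rho|_{V_1})$ about a codimension~2 subspace containing $\{\rho(v): v\in V_1\cap V_2\}$'' --- your rotation of $G_2$ in a $2$-plane complementary to the affine hull of the shared points is exactly this construction. The only difference is that you carry out in full the non-triviality check (using condition~(ii) to find $u\in V_1\setminus V_2$ and $w\in V_2\setminus V_1$ off the rotation axis, and choosing the rotation plane so the distance $\|\rho_t(u)-\rho_t(w)\|$ genuinely varies), which the paper leaves implicit.
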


\begin{proof}
    This folklore result can be proved by applying a rotation to the framework $(G_1,\rho|_{V_1})$ about a codimension 2 subspace containing $\{\rho (v) : v \in V_1 \cap V_2\}$.
\end{proof}

\begin{lemma}\label{p:basic}
    Let $G=(V,E)$ be a $d$-sphere graph.
    \begin{enumerate}
        \item\label{p:basic1} If $G$ is globally $d$-sphere-rigid, then $G$ is $d$-sphere-rigid.
        \item\label{p:basic2} If $G$ is $d$-sphere-rigid, $d \leq 3$ and $|V|\geq d+1$, then $G$ is $d$-connected.
    \end{enumerate}
\end{lemma}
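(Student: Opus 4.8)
Part \Cref{p:basic1} I would dispatch in a single line directly from the definitions: global $d$-sphere-rigidity asserts that $G$ has \emph{exactly one} $d$-sphere realisation modulo isometries, and ``exactly one'' is in particular ``finitely many'', which is precisely the definition of $d$-sphere-rigidity.

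For \Cref{p:basic2} the plan is to argue the contrapositive: assuming $G$ is not $d$-connected (with $d\le 3$ and $|V|\ge d+1$), I will exhibit a $d$-sphere realisation that is a \emph{flexible} framework, which by the remark immediately preceding the lemma shows that $G$ is not $d$-sphere-rigid. Since $G$ is not $d$-connected, there is a vertex separation $V=V_1\cup V_2$ with $|V_1\cap V_2|\le d-1$, with $V_1\setminus V_2$ and $V_2\setminus V_1$ both nonempty, and with no edges between $V_1\setminus V_2$ and $V_2\setminus V_1$; write $G=G_1\cup G_2$ where $G_i=G[V_i]$. If $V_1\cap V_2=\emptyset$ (so $G$ is disconnected) the claim is easy: translating $G_2$ away from $G_1$ along a generic direction preserves all edge lengths (there are no cross edges) and only increases the already strict pairwise distances between the two parts, giving a continuous family of pairwise non-congruent sphere realisations. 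So I may assume the cut $S:=V_1\cap V_2$ satisfies $1\le |S|\le d-1$.

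Now fix a sphere realisation $\rho$ and try to invoke \Cref{lem:rotate}. Hypothesis \Cref{p:rotate1} holds automatically: any two vertices of a sphere graph satisfy $\|\rho(u)-\rho(v)\|\ge 1>0$, hence map to distinct points, and a set of at most $|S|\le d-1\le 2$ distinct points is affinely independent. For hypothesis \Cref{p:rotate2} I must verify that each of $V_1,V_2$ spans an affine subspace of dimension at least $d-1$. When $d=2$ this is automatic, since each side has at least two distinct points ($|V_i|\ge |S|+1\ge 2$) and so spans a line; thus for pennies \Cref{lem:rotate} applies directly and produces a flex. Because the non-edge inequalities are strict, a sufficiently small portion of this flex consists of genuine sphere realisations (edge lengths are preserved, and non-edges remain $>1$ by continuity), and these are pairwise non-congruent, as required.

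The remaining and genuinely delicate case is $d=3$, where hypothesis \Cref{p:rotate2} can fail because a side may be collinear. I would resolve this by choosing which side to rotate and about which codimension-2 line $W\supseteq \rho(S)$: if both sides span at least a plane, \Cref{lem:rotate} applies verbatim, while if exactly one side, say $V_2$, is collinear, I would rotate the full-dimensional side $G_1$ about a line $W$ through $\rho(S)$ chosen to avoid $V_1$, and check the motion cannot be undone by an isometry fixing $\rho(V_2)$. The hardest sub-case is $|S|=2$ with $V_2$ lying on the very line through $\rho(S)$: then every rotation about $W$ fixes $V_2$ pointwise and is a congruence, so \Cref{lem:rotate} yields nothing, and instead I would build the flex by bending the collinear chain $G_2$ at a free joint (an extreme marble, or an interior vertex), which is possible unless the chain is taut between the two cut vertices. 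Flexing (or excluding) this final taut configuration, using $|V|\ge d+1$ together with the room left by the strict non-overlap inequalities, is the step I expect to be the main obstacle, since it is exactly where the non-generic nature of sphere realisations departs from the generic rigidity heuristic underlying \Cref{lem:rotate}.
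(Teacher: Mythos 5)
Your treatment of part \ref{p:basic1} and your overall strategy for part \ref{p:basic2} coincide with the paper's proof: the paper also disposes of \ref{p:basic1} in one line from the definitions, and also proves \ref{p:basic2} via \Cref{lem:rotate} (by contradiction rather than contraposition, which is the same argument). In two respects you are actually more careful than the printed proof: you make explicit that a framework flex of a sphere realisation stays within sphere realisations because the non-edge inequalities are strict, and you separately handle the disconnected case $S=\emptyset$ by translation, where point \ref{p:rotate2} of \Cref{lem:rotate} can also fail; the paper leaves both implicit.

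The problem is the gap you flagged yourself: you never complete the $d=3$ sub-case where $|S|=2$ and the separated side is collinear and taut between the two cut vertices, and a sketch whose ``main obstacle'' is left unresolved is not a proof. For comparison, the paper dispatches the entire low-dimensional-span case in one line: if the affine span of $\{\rho(v):v\in V_i\}$ has dimension $1$, there must exist a vertex of degree $1$, and a vertex of degree at most $1$ in a framework on at least $d+1\geq 4$ vertices flexes obviously. That observation is airtight whenever some extreme point of the collinear set $\rho(V_i)$ lies in $V_i\setminus S$: on a line, an extreme point has at most one other point at distance exactly $1$ from it, and a vertex of $V_i\setminus S$ has all its neighbours in $V_i$. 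But in the configuration you isolated --- both extremes of the collinear side are the two separator vertices, with the interior chain vertices having degree $2$ --- no degree-$1$ vertex need exist, and the chain is genuinely held rigid by the triangle inequality (this is exactly the taut-path mechanism the paper itself exploits in \Cref{fig:sparsepenny}, one dimension up); moreover, as you correctly note, rotating the complementary side about the line through $\rho(S)$ is a congruence, so \Cref{lem:rotate} gives nothing. So your instinct was sound: this sub-case is precisely the thinnest point of the paper's own argument, and closing it requires either exhibiting a flexible realisation of such a graph directly, or showing that a marble graph cannot have every one of its realisations wedge the chain taut --- a step supplied neither by your sketch nor, read literally, by the paper's degree-$1$ claim.
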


\begin{proof}
    Property \ref{p:basic1} follows immediately from the definitions.

    Suppose for a contradiction that $G$ is $d$-sphere rigid but not $d$-connected.
    Fix $G_1=(V_1,E_1)$ and $G_2=(V_2,E_2)$ to be subgraphs of $G$ where $G= G_1 \cup G_2$ and $|V_1 \cap V_2| \leq d-1$.
    Fix $\rho$ to be a $d$-sphere realisation of $G$.
    When the hypotheses of \Cref{lem:rotate} are satisfied, the fact that $(G,\rho)$ is rigid is contradicted and property \ref{p:basic2} follows. Since point \ref{p:rotate1} of \Cref{lem:rotate} always holds when $d \leq 3$, it remains to check the case when $d=3$ and the affine span of the set $\{\rho (v) : v \in V_i\}$ has dimension $1$ for some $i \in \{1,2\}$. 
    In this case flexibility is obvious as there must exist a vertex of degree 1, contradicting $d$-sphere-rigidity.
\end{proof}

\begin{figure}[ht]
    \centering
  
    \begin{tikzpicture}
        \foreach \x in {1,2,...,5}
        {
            \node[vertex] (b\x) at (\x,0) {};
        }
        \foreach \x [evaluate=\x as \nx using int(\x+1)] in {1,2,...,4}
        {
            \node[vertex,rotate around={60:(b\x)}] (c\x) at (b\nx) {};
            \draw[edge] (b\x)edge(b\nx);
            \draw[edge] (b\x)edge(c\x);
            \draw[edge] (b\nx)edge(c\x);
        }
        \foreach \x [evaluate=\x as \nx using int(\x+1)] in {1,2,...,3}
        {
            \draw[edge] (c\x)edge(c\nx);
        }
        \node[vertex,rotate around={-60:(b1)}] (d1) at (b2) {};
        \node[vertex,rotate around={-60:(b4)}] (d2) at (b5) {};
        \draw[edge] (b2)edge(d1) (b1)edge(d1);
        \draw[edge] (b4)edge(d2) (b5)edge(d2);
        
        \node[vertex,rotate around={-60:(b1)}] (e1) at (d1) {};
        \node[vertex,rotate around={60:(b5)}] (e2) at (d2) {};
        \draw[edge] (b1)edge(e1) (d1)edge(e1);
        \draw[edge] (b5)edge(e2) (d2)edge(e2);
        
        \node[vertex,rotate around={-60:(e1)}] (f1) at (d1) {};
        \node[vertex,rotate around={60:(e2)}] (f2) at (d2) {};
        \draw[edge] (d1)edge(f1) (e1)edge(f1);
        \draw[edge] (d2)edge(f2) (e2)edge(f2);
        
        \node[vertex,rotate around={-60:(e1)}] (g1) at (f1) {};
        \node[vertex,rotate around={60:(e2)}] (g2) at (f2) {};
        \draw[edge] (e1)edge(g1) (f1)edge(g1);
        \draw[edge] (e2)edge(g2) (f2)edge(g2);
        
        \node[vertex,rotate around={60:(f1)}] (h1) at (g1) {};
        \node[vertex,rotate around={-60:(f2)}] (h2) at (g2) {};
        \draw[edge] (g1)edge(h1) (f1)edge(h1);
        \draw[edge] (g2)edge(h2) (f2)edge(h2);
        
        \node[vertex,rotate around={60:(f1)}] (i1) at (h1) {};
        \node[vertex,rotate around={-60:(f2)}] (i2) at (h2) {};
        \draw[edge] (f1)edge(i1) (h1)edge(i1);
        \draw[edge] (f2)edge(i2) (h2)edge(i2);
        
        \node[vertex] (l1) at ($(i1)+(1,0)$) {};
        \node[vertex] (l2) at ($(i2)-(1,0)$) {};
        \draw[edge] (i1)edge(l1) (l1)edge(l2) (l2)edge(i2);
        
        \node[vertex,rotate around={60:(l1)}] (m) at (l2) {};
        \draw[edge] (l1)edge(m) (l2)edge(m);
    \end{tikzpicture}
    \caption{A graph that is penny-rigid but not globally penny-rigid. The violation of global-penny rigidity comes from reflecting the penny corresponding to the degree two vertex through the unique line determined by the centers of its two adjacent pennies.}
    \label{fig:penny-rig-not-glob}
\end{figure}
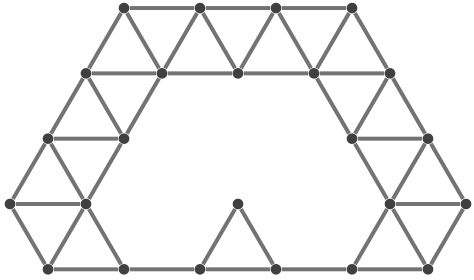

Note that both the converse implications are false. For example, in the penny case we observe that \Cref{fig:penny-rig-not-glob} is penny-rigid but not globally penny-rigid and any cycle with at least 4 vertices is 2-connected but not penny-rigid.

Before stating our main result, we need the following elementary lemma in which the clique number is the size of the largest complete subgraph. This follows from the observation that at most $d+1$ points in $d$-dimensional Euclidean space can be exactly distance 1 from each other.

\begin{lemma}\label{lem:clique}
Let $G$ be a $d$-sphere graph. Then the clique number of $G$ is at most $d+1$.    
\end{lemma}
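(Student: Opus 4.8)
The plan is to reduce the combinatorial statement to a purely geometric fact about equidistant points, exactly as the remark preceding the lemma suggests. Suppose $K$ is a clique of $G$ on vertex set $\{v_0, v_1, \ldots, v_k\}$, and fix any $d$-sphere realisation $\rho$ of $G$. Since every pair $v_i v_j$ is an edge, the defining property of a $d$-sphere graph gives $\|\rho(v_i) - \rho(v_j)\| = 1$ for all $i \neq j$. In particular the points $\rho(v_0), \ldots, \rho(v_k)$ are $k+1$ distinct points of $\R^d$ that are pairwise at distance exactly $1$. Thus it suffices to show that $\R^d$ contains at most $d+1$ pairwise-unit-distance points, from which $k+1 \leq d+1$ follows and hence the clique number is at most $d+1$.

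To prove the geometric claim I would translate so that $\rho(v_0)$ is the origin and work with the difference vectors $u_i := \rho(v_i) - \rho(v_0)$ for $i = 1, \ldots, k$. The unit-distance conditions translate into inner-product data: $\|u_i\|^2 = 1$ from the edge $v_0 v_i$, while expanding $\|u_i - u_j\|^2 = 1$ for $i \neq j$ and substituting $\|u_i\|^2 = \|u_j\|^2 = 1$ yields $\langle u_i, u_j \rangle = \tfrac{1}{2}$. Hence the Gram matrix $M = (\langle u_i, u_j\rangle)_{i,j}$ equals $\tfrac{1}{2}(I + J)$, where $I$ is the $k \times k$ identity matrix and $J$ the $k \times k$ all-ones matrix.

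The key step is to observe that $M$ is positive definite: the eigenvalues of $J$ are $k$ (with multiplicity one) and $0$ (with multiplicity $k-1$), so the eigenvalues of $M$ are $\tfrac{k+1}{2}$ and $\tfrac{1}{2}$, all strictly positive. A positive definite Gram matrix forces its generating vectors $u_1, \ldots, u_k$ to be linearly independent, so $k \leq \dim \R^d = d$. This gives the bound $k+1 \leq d+1$ on the number of pairwise-equidistant points and completes the proof.

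I do not anticipate a genuine obstacle here, since the argument is elementary; the only point requiring any care is the passage from ``pairwise unit distance'' to the rank bound. A reader could alternatively phrase this via affine independence --- $d+2$ pairwise-equidistant points would be the vertices of a regular $(d+1)$-simplex, which has affine dimension $d+1 > d$ and so cannot be embedded in $\R^d$ --- but the Gram-matrix computation is the most self-contained route and avoids any appeal to properties of regular simplices.
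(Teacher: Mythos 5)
Your proof is correct and follows exactly the route the paper takes: the paper simply asserts the observation that at most $d+1$ points in $\mathbb{R}^d$ can be pairwise at distance $1$ and leaves it unproved, while you supply a complete and accurate justification of that observation via the positive definiteness of the Gram matrix $\tfrac{1}{2}(I+J)$. Nothing is missing; your write-up is strictly more detailed than the paper's one-line remark.
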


Our next result significantly improves \Cref{p:basic} to give a complete characterisation of global $d$-sphere-rigidity in the case when $G$ is chordal. Recall that a graph is \emph{chordal} if every induced cycle is a triangle and a \emph{$d$-tree} if $G$ can be obtained recursively starting from $K_{d+1}$ and at each stage adding a new vertex of degree $d$ whose neighbour set induces a copy of $K_{d}$.

\begin{theorem}\label{l:chordal}
    Let $G=(V,E)$ be a chordal $d$-sphere graph and suppose $d\leq 3$.
    If $|V| \geq d+1$,
    then the following are equivalent.
    \begin{enumerate}
        \item\label{l:chordal1} $G$ is globally $d$-sphere-rigid.
        \item\label{l:chordal2} $G$ is $d$-sphere-rigid.
        \item\label{l:chordal3} $G$ is $d$-connected.
        \item\label{l:chordal4} $|E| = d|V|-\binom{d+1}{2}$.
        \item\label{l:chordal5} $G$ is a $d$-tree.
    \end{enumerate}
\end{theorem}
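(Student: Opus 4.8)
The plan is to prove the theorem through the cycle of implications \ref{l:chordal1} $\Rightarrow$ \ref{l:chordal2} $\Rightarrow$ \ref{l:chordal3} $\Rightarrow$ \ref{l:chordal5} $\Rightarrow$ \ref{l:chordal1}, supplemented by \ref{l:chordal5} $\Leftrightarrow$ \ref{l:chordal4}. The first two implications are free: they are exactly the two parts of \Cref{p:basic}, and this is the only place where the hypothesis $d\le 3$ is genuinely needed. The remaining work divides into a purely combinatorial part, relating connectivity, the edge count and the $d$-tree structure, and a geometric part showing that a $d$-tree realisation is globally rigid; the latter is the crux.

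For the combinatorial implications I would peel simplicial vertices. As $G$ is chordal it has a simplicial vertex $v$, and \Cref{lem:clique} gives clique number at most $d+1$, so the clique $N(v)\cup\{v\}$ has at most $d+1$ vertices and hence $\deg v\le d$; if $G$ is $d$-connected then $\deg v\ge d$, forcing $\deg v=d$ and $N(v)\cong K_d$. One checks that $G-v$ is again chordal and a $d$-sphere graph, and that it stays $d$-connected: a separator of $G-v$ of size $<d$ cannot meet all of the clique $N(v)$, so $N(v)$ lies in one component and re-attaching $v$ leaves that separator separating $G$, contradicting $d$-connectivity. Induction on $|V|$ with base case $K_{d+1}$ (the unique $d$-connected graph on $d+1$ vertices) then yields \ref{l:chordal3} $\Rightarrow$ \ref{l:chordal5}. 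The edge count \ref{l:chordal5} $\Rightarrow$ \ref{l:chordal4} is the computation $\binom{d+1}{2}+d(|V|-d-1)=d|V|-\binom{d+1}{2}$. For \ref{l:chordal4} $\Rightarrow$ \ref{l:chordal5} I would fix a perfect elimination ordering $v_1,\dots,v_n$ and write $|E|=\sum_i b_i$, where $b_i$ counts the neighbours of $v_i$ among $v_{i+1},\dots,v_n$; these form a clique, so $b_i\le\min(d,n-i)$ by \Cref{lem:clique}, and summing gives $|E|\le d|V|-\binom{d+1}{2}$ with equality forcing $b_i=\min(d,n-i)$ for all $i$, which is precisely the reverse of the $d$-tree construction.

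The heart of the argument is \ref{l:chordal5} $\Rightarrow$ \ref{l:chordal1}, proved by induction along the $d$-tree construction. In the base case $G=K_{d+1}$ every realisation is a regular unit $d$-simplex, unique up to isometry. For the inductive step write $G=G'+v$ with $G'$ a $d$-tree and $v$ attached to a $d$-clique $\{u_1,\dots,u_d\}$; by induction $G'$ has a unique realisation $\rho_0$ up to isometry, so after an isometry any $d$-sphere realisation $\rho$ of $G$ agrees with $\rho_0$ on $G'$. The points $\rho_0(u_1),\dots,\rho_0(u_d)$ form a regular unit $(d-1)$-simplex of circumradius $\sqrt{(d-1)/2d}<1$, so there are exactly two points at distance $1$ from all of them, reflections of each other across the hyperplane they span; call them $p_+$ and $p_-$. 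The decisive structural fact is that in the $d$-tree $G'$ the clique $\{u_1,\dots,u_d\}$ extends to a $K_{d+1}$, i.e.\ there is a vertex $w\ne v$ of $G'$ adjacent to all the $u_i$. Then $\rho_0(w)\in\{p_+,p_-\}$, and since $vw$ is a non-edge the strict inequality in the $d$-sphere condition forbids $\rho(v)$ from landing on $\rho_0(w)$, pinning $\rho(v)$ to the other point. Hence $\rho(v)$ is determined and $G$ has a unique realisation up to isometry.

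I expect this last blocking step to be the main obstacle. Generically a $d$-tree is only minimally rigid and admits a reflective flex at every simplicial vertex, so global rigidity cannot be detected by infinitesimal methods and must come entirely from the strict non-edge inequalities; the essential input is that every $d$-clique of a $d$-tree lies in a $K_{d+1}$ (an easy induction on the construction), which guarantees that the reflected position is already occupied by a non-neighbour and is therefore forbidden. This is exactly where being a $d$-sphere realisation, rather than an arbitrary bar-joint framework, is used. I would also note that the geometric and combinatorial steps here are dimension-independent, so the only obstruction to extending the theorem beyond $d\le 3$ lies in \Cref{p:basic}.
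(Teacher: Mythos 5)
Your proposal is correct and follows essentially the same route as the paper: \ref{l:chordal1}$\Rightarrow$\ref{l:chordal2}$\Rightarrow$\ref{l:chordal3} comes from \Cref{p:basic}, and the crucial step \ref{l:chordal5}$\Rightarrow$\ref{l:chordal1} is proved by exactly the paper's induction along the $d$-tree construction, using the same blocking fact that the attaching $d$-clique extends to a $K_{d+1}$, so the reflected candidate position for the new vertex is already occupied by a non-neighbour and is excluded by the strict non-edge inequality. The only difference is that the paper simply cites the known equivalence of \ref{l:chordal3}, \ref{l:chordal4} and \ref{l:chordal5} for chordal graphs of clique number at most $d+1$ (via \cite{patil}), whereas you reprove it by simplicial-vertex peeling and a perfect-elimination-ordering edge count, and you make explicit the circumradius computation $\sqrt{(d-1)/2d}<1$ that the paper leaves implicit.
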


\begin{proof}
    Since $G$ is a $d$-sphere graph, \Cref{lem:clique} implies that it has a clique number of at most $d+1$.
    The equivalence of conditions \ref{l:chordal3}, \ref{l:chordal4} and \ref{l:chordal5} for chordal graphs with clique number at most $d+1$ is well known (see, for example, \cite{patil}).
    \Cref{p:basic} shows that  \ref{l:chordal1} implies \ref{l:chordal2}, and  \ref{l:chordal2} implies  \ref{l:chordal3}.
    Hence, it suffices to prove that \ref{l:chordal3} implies \ref{l:chordal1}.
    This is trivial if $|V| =d+1$ since this implies $G$ is $K_{d+1}$.
    
    Suppose that $|V| > d+1$ and \ref{l:chordal3} implies \ref{l:chordal1} holds for any chordal $d$-sphere graph with less than $|V|$ vertices.
    Since $G$ is $d$-connected,
    it is a $d$-tree;
    in particular,
    $G$ is formed from a $d$-tree $G'=(V',E')$ by adding a vertex $x$ of degree $d$ that is adjacent to a clique $\{x_1,\ldots,x_d\}$ of $G'$.
    Since $G'$ is also a $d$-tree with at least $d+1$ vertices,
    there exists a vertex $y$ of $G'$ such that $\{x_1,x_2,\dots, x_d,y\}$ is a clique of $G'$.
    Choose any two $d$-sphere realisations $\rho,\rho'$ of $G$ where $\rho(v)=\rho'(v)$ for each $v \in \{x_1,\ldots,x_d,y\}$ (since this vertex set is a clique of $G'$, this effectively quotients out the isometries of $\mathbb{R}^d$).
    Since $G'$ is globally $d$-sphere-rigid by the induction hypothesis,
    we have that $\rho(v) = \rho'(v)$ for all $v \neq x$.
    To be distance 1 from each point $\rho(x_i)$,
    either $\rho'(x) = \rho(x)$ or $\rho'(x)$ is the point found by reflecting $\rho(x)$ through the affine hyperplane defined by $\rho(x_1), \ldots,\rho(x_d)$.
    The latter position is already occupied by $\rho(y) = \rho'(y)$,
    and so $\rho'(x) = \rho(y)$ would contradict that $(G,\rho)$ is a $d$-sphere realisation.
    Hence, $\rho = \rho'$,
    which concludes the proof.
\end{proof}

\section{Penny and marble graph examples}
\label{sec:exam}

In this section, we focus on penny and marble graphs and present a number of concrete examples which illustrate that the notions of flexibility, rigidity and global rigidity for penny/marble graphs differ from the corresponding concepts for generic frameworks. 

A generic rigid framework in $\mathbb{R}^d$ on at least $d$ vertices must have at least $d|V|-\binom{d+1}{2}$ edges \cite{Maxwell}. \Cref{fig:sparsepenny} illustrates that there is no such bound for penny-rigid graphs. The indicated path would generically flex, but the penny structure forces the centers on the path to be collinear which holds the path tight.
This proves in particular that rigidity does not imply penny rigidity.

\begin{figure}[ht]
    \centering
    \begin{tikzpicture}
        \foreach \x in {1,2,...,6}
        {
            \node[vertex] (b\x) at (\x,0) {};
        }
        \foreach \x [evaluate=\x as \nx using int(\x+1)] in {1,2,4,5}
        {
            \node[vertex,rotate around={60:(b\x)}] (c\x) at (b\nx) {};
            \draw[edge] (b\x)edge(b\nx);
            \draw[edge] (b\x)edge(c\x);
            \draw[edge] (b\nx)edge(c\x);
        }
        \foreach \x [evaluate=\x as \nx using int(\x+1)] in {1,4}
        {
            \draw[edge] (c\x)edge(c\nx);
        }
        \node[vertex,rotate around={60:(b3)}] (c3) at (b4) {};
        \draw[edge,dotted] (b3)edge(b4);
        \draw[edge,dotted] (b3)edge(c3);
        \draw[edge,dotted] (b4)edge(c3);
        \draw[edge,dotted] (c2)edge(c3) (c3)edge(c4);
        
        \node[vertex,rotate around={-60:(b1)}] (d1) at (b2) {};
        \node[vertex,rotate around={-60:(b5)}] (d2) at (b6) {};
        \draw[edge] (b2)edge(d1) (b1)edge(d1);
        \draw[edge] (b5)edge(d2) (b6)edge(d2);
        
        \node[vertex,rotate around={-60:(b1)}] (e1) at (d1) {};
        \node[vertex,rotate around={60:(b6)}] (e2) at (d2) {};
        \draw[edge] (b1)edge(e1) (d1)edge(e1);
        \draw[edge] (b6)edge(e2) (d2)edge(e2);
        
        \node[vertex,rotate around={-60:(e1)}] (f1) at (d1) {};
        \node[vertex,rotate around={60:(e2)}] (f2) at (d2) {};
        \draw[edge] (d1)edge(f1) (e1)edge(f1);
        \draw[edge] (d2)edge(f2) (e2)edge(f2);
        
        \node[vertex,rotate around={60:(d1)}] (g1) at (f1) {};
        \node[vertex,rotate around={-60:(d2)}] (g2) at (f2) {};
        \draw[edge] (d1)edge(g1) (f1)edge(g1);
        \draw[edge] (d2)edge(g2) (f2)edge(g2);

        \node[vertex] (l1) at ($(g1)+(1,0)$) {};
        \node[vertex] (l2) at ($(g2)-(1,0)$) {};
        \draw[edge] (g1)edge(l1) (l2)edge(g2);
        \draw[edge,dotted] (l1)edge(l2);
    \end{tikzpicture}
    \caption{A class of (globally) penny-rigid graphs with arbitrarily fewer edges than $2|V|-3$.}
    \label{fig:sparsepenny}
\end{figure}
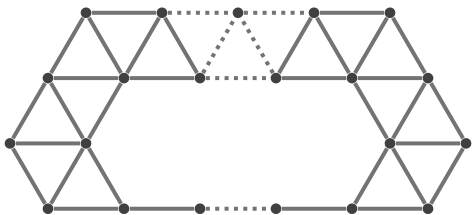

\Crefrange{fig:sparsepenny}{fig:penny-rig-notglob} illustrate the various options comparing the generic situation and the penny situation and \Crefrange{fig:MarbleGloballyRigid}{fig:flexmarble} are corresponding examples in the marble case. Data sets containing the marble realisations found in \Crefrange{fig:MarbleGloballyRigid}{fig:flexmarble} can be found at \cite{MarbleData}. The fact that all the examples in \Cref{fig:pennybarjoint} exhibit the bar-joint rigidity properties claimed is standard. The flexibility as penny graphs in (a) and (b) is due to the independent edge cuts being realised as parallel `bars'.

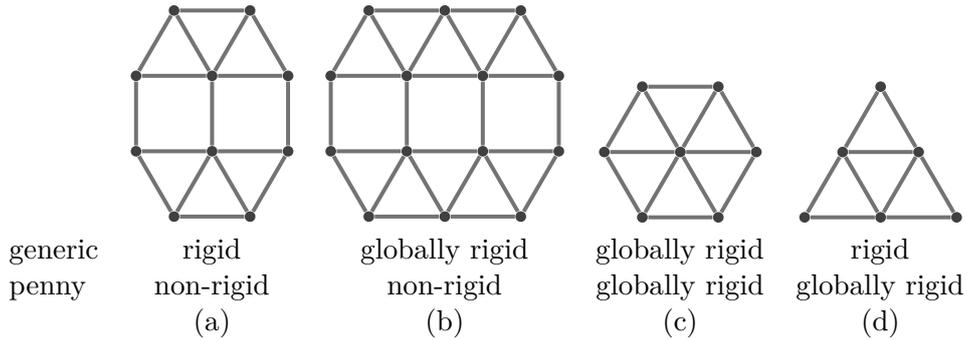
\begin{figure}[ht]
    \centering
    \begin{tabular}{lcccc}
        &
        \begin{tikzpicture}
            \foreach \x in {1,2,3}
            {
                \node[vertex] (b\x) at (\x,0.5) {};
                \node[vertex] (bp\x) at (\x,-0.5) {};
            }
            \foreach \x [evaluate=\x as \nx using int(\x+1)] in {1,2}
            {
                \node[vertex,rotate around={60:(b\x)}] (c\x) at (b\nx) {};
                \node[vertex,rotate around={-60:(bp\x)}] (cp\x) at (bp\nx) {};
                \draw[edge] (b\x)edge(b\nx);
                \draw[edge] (b\x)edge(c\x);
                \draw[edge] (b\nx)edge(c\x);
                \draw[edge] (bp\x)edge(bp\nx);
                \draw[edge] (bp\x)edge(cp\x);
                \draw[edge] (bp\nx)edge(cp\x);
            }
            \draw[edge] (c1)edge(c2) (cp1)edge(cp2);
            \draw[edge] (b1)edge(bp1) (b2)edge(bp2) (b3)edge(bp3);
        \end{tikzpicture}
        &
        \begin{tikzpicture}
            \foreach \x in {1,2,3,4}
            {
                \node[vertex] (b\x) at (\x,0.5) {};
                \node[vertex] (bp\x) at (\x,-0.5) {};
            }
            \foreach \x [evaluate=\x as \nx using int(\x+1)] in {1,2,3}
            {
                \node[vertex,rotate around={60:(b\x)}] (c\x) at (b\nx) {};
                \node[vertex,rotate around={-60:(bp\x)}] (cp\x) at (bp\nx) {};
                \draw[edge] (b\x)edge(b\nx);
                \draw[edge] (b\x)edge(c\x);
                \draw[edge] (b\nx)edge(c\x);
                \draw[edge] (bp\x)edge(bp\nx);
                \draw[edge] (bp\x)edge(cp\x);
                \draw[edge] (bp\nx)edge(cp\x);
            }
            \draw[edge] (c1)edge(c2) (c2)edge(c3) (cp1)edge(cp2) (cp2)edge(cp3);
            \draw[edge] (b1)edge(bp1) (b2)edge(bp2) (b3)edge(bp3) (b4)edge(bp4);
        \end{tikzpicture}
        &
        \begin{tikzpicture}
            \foreach \i [evaluate=\i as \w using 60*\i] in {1,2,3,4,5,6}
            {
                \node[vertex] (a\i) at (\w:1) {};
            }
            \node[vertex] (a0) at (0,0) {};
            \foreach \i [evaluate=\i as \oi using {int(mod(\i,6)+1)}] in {1,2,3,4,5,6}
            {
                \draw[edge] (a0)edge(a\i) (a\i)edge(a\oi);
            }
        \end{tikzpicture}
        &
        \begin{tikzpicture}[rotate=180]
            \coordinate (o) at (0,0);
            \coordinate (h) at (1,0);
            \coordinate[rotate around={60:(o)}] (r) at (h);
            \coordinate[rotate around={-60:(o)}] (l) at ($-1*(h)$);
            \foreach \i in {1,2,3}
            {
                \foreach \j in {1,...,\i}
                {
                    \node[vertex] (a\i\j) at ($\i*(r)+{\j-\i+1}*(h)$) {};
                }
            }
            \foreach \i [remember=\i as \li (initially 1)] in {2,3}
            {
                \foreach \j [remember=\j as \lj (initially 1)] in {2,...,\i}
                {
                    \draw[edge] (a\i\j)edge(a\i\lj) (a\i\j)edge(a\li\lj) (a\i\lj)edge(a\li\lj);
                }
            }
        \end{tikzpicture}
        \\
        generic & rigid & globally rigid & globally rigid & rigid\\
        penny  & non-rigid & non-rigid & globally rigid & globally rigid\\
        & (\alph{subfig})\stepcounter{subfig} & (\alph{subfig})\stepcounter{subfig} & (\alph{subfig})\stepcounter{subfig} & (\alph{subfig})
    \end{tabular}
    \caption{Graphs with different rigidity properties in the 2-dimensional generic and penny settings.}
    \label{fig:pennybarjoint}
\end{figure}

\begin{figure}[ht]
    \centering
    \begin{tikzpicture}[scale=1]
        \coordinate (o) at (0,0);
        \coordinate (h) at (1,0);
        \coordinate[rotate around={60:(o)}] (r) at (h);
        \coordinate[rotate around={-60:(o)}] (l) at ($-1*(h)$);
        \node[vertex] (a1) at (0,0) {};
        \node[vertex] (a2) at (h) {};
        \draw[edge] (a1)edge(a2);
        \foreach \i [count=\c,evaluate=\i as \j using int(\i+1),remember=\i as \o (initially 1)] in {3,5,7}
        {
            \node[vertex] (a\i) at ($\c*(r)$) {};
            \node[vertex] (a\j) at ($\c*(r)-(h)$) {};
            \draw[edge] (a\o)edge(a\i) (a\o)edge(a\j) (a\i)edge(a\j);
        }
        \draw[edge] (a2)edge(a3);
        \draw[edge] (a4)edge(a6) (a6)edge(a8);
        \foreach \i [count=\c,evaluate=\i as \j using int(\i+1),remember=\i as \o (initially 7)] in {9,11,13}
        {
            \node[vertex] (a\i) at ($(a7)+\c*(h)$) {};
            \node[vertex] (a\j) at ($(a5)+\c*(h)$) {};
            \draw[edge] (a\o)edge(a\i) (a\o)edge(a\j) (a\i)edge(a\j);
        }
        \node[vertex] (a16) at ($(a14)+(h)$) {};
        \draw[edge] (a5)edge(a10) (a10)edge(a12) (a12)edge(a14) (a14)edge(a16) (a13)edge(a16);
        \node[vertex] (a17) at ($(a16)-(r)$) {};
        \draw[edge] (a14)edge(a17) (a16)edge(a17);
        
        \path[save path=\circa,name path=circa] (a2) circle[radius=2cm];
        \path[save path=\circb,name path=circb] (a17) circle[radius=2cm];
        
        \path[name intersections={of=circa and circb, by={m1,m2} }];
        \node[vertex] (m) at (m2) {};
        \node[vertex] (b1) at ($(a2)!0.5!(m)$) {};
        \draw[edge] (a2)edge(b1) (b1)edge(m);
        \node[vertex,rotate around={-60:(a2)}] (b2) at (b1) {};
        \node[vertex,rotate around={-60:(b1)}] (b3) at (m) {};
        \draw[edge] (a2)edge(b2) (b2)edge(b1) (b2)edge(b3) (b3)edge(m) (b3)edge(b1);
        \node[vertex] (c1) at ($(a17)!0.5!(m)$) {};
        \draw[edge] (a17)edge(c1) (c1)edge(m);
        \node[vertex,rotate around={60:(a17)}] (c2) at (c1) {};
        \node[vertex,rotate around={60:(c1)}] (c3) at (m) {};
        \draw[edge] (a17)edge(c2) (c2)edge(c1) (c2)edge(c3) (c3)edge(m) (c3)edge(c1);
        \begin{scope}[opacity=0.5]
            \node[vertex,rotate around={60:(a2)}] (b2s) at (b1) {};
            \node[vertex,rotate around={60:(b1)}] (b3s) at (m) {};
            \draw[edge,dotted] (a2)edge(b2s) (b2s)edge(b1) (b2s)edge(b3s) (b3s)edge(m) (b3s)edge(b1);
            \draw pic ["\tiny71.76",draw=black,line width=1pt,angle radius=0.8cm,dotted] {angle=b2s--a2--a3};
            \draw pic ["\tiny68.68",draw=black,line width=1pt,angle radius=0.8cm,dotted] {angle=c1--m--b3s};
        \end{scope}
    \end{tikzpicture}
    \caption{A penny graph (with its corresponding penny realisation) that is generically rigid and penny-rigid,
    but is not generically globally rigid nor globally penny-rigid. The graph has exactly two penny realisations (modulo isometries). The dashed edges indicate the second realisation. The dotted angles show that this realisation is indeed a penny-realisation.}
    \label{fig:penny-rig-notglob}
\end{figure}
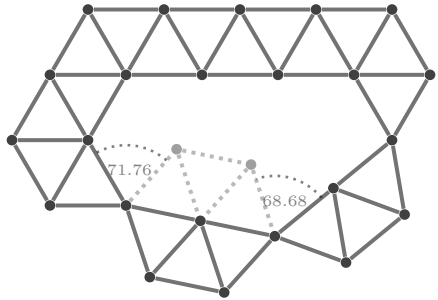

\begin{figure}[ht]
    \centering
    \includegraphics[width=4cm]{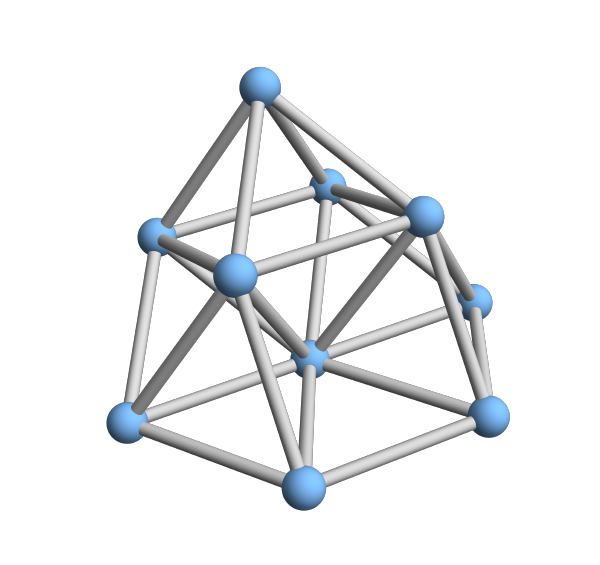}
    \includegraphics[width=4cm]{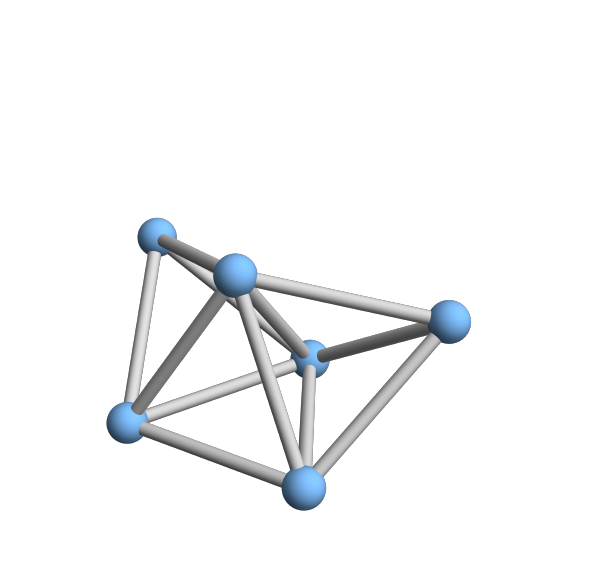}
    \includegraphics[width=5cm]{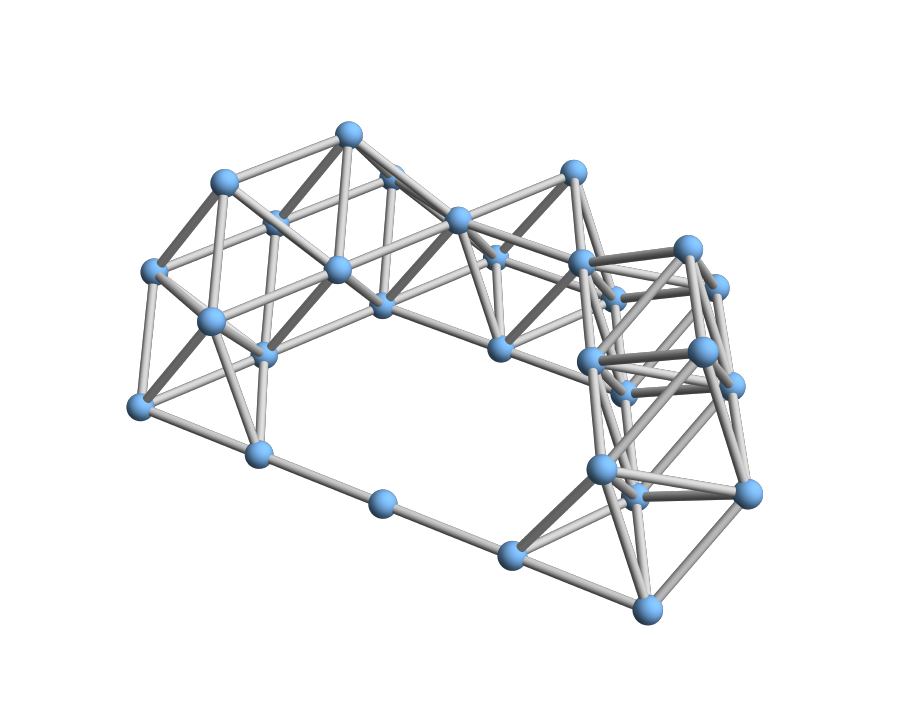}
    \caption{Three globally marble-rigid graphs with differing generic rigidity properties: (Left) globally rigid, (middle) rigid but not globally rigid, and (right) not rigid.
    }
    \label{fig:MarbleGloballyRigid}
\end{figure}

\begin{figure}[ht]
    \centering
    \includegraphics[width=5cm]{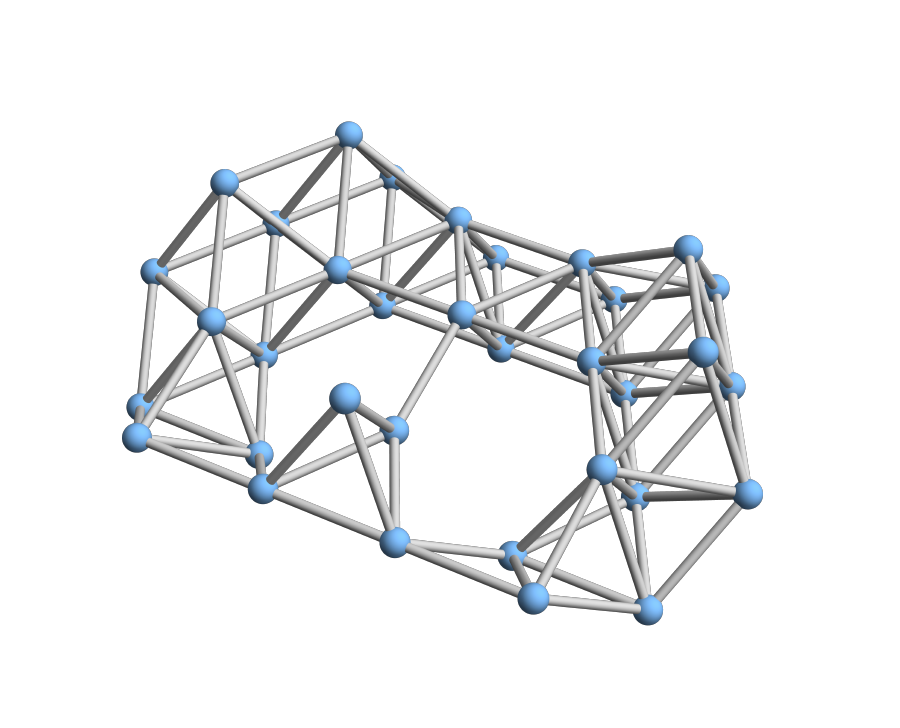}
    \caption{A graph that is marble rigid (not globally marble rigid) but not generically rigid.}
    \label{fig:BarJointFlexibleMarbleRigid}
\end{figure}

\begin{figure}[ht]
    \centering
    \includegraphics[width=5cm]{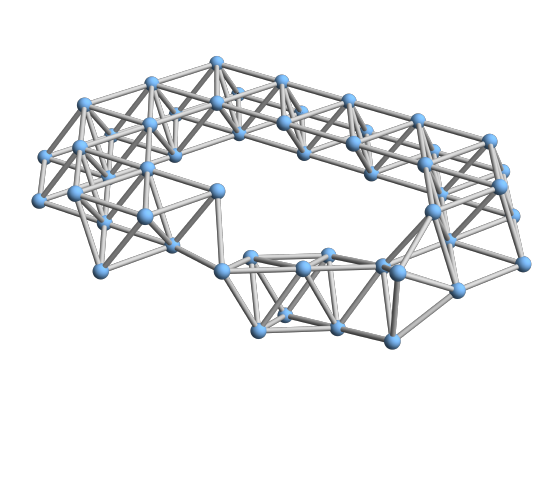}
    \includegraphics[width=5cm]{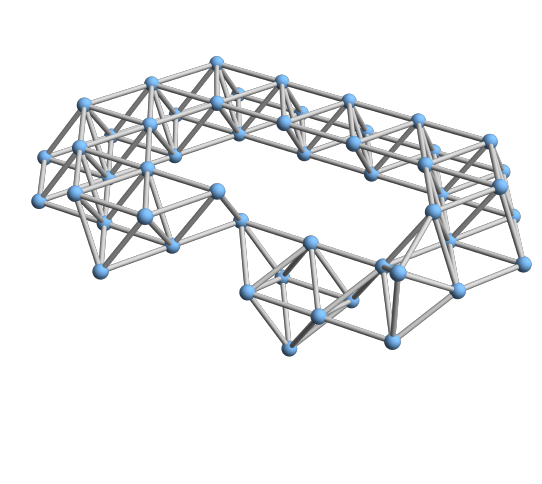}
    \caption{Two equivalent marble realisations of a graph that is marble-rigid and rigid, but not globally rigid or globally marble-rigid. Note that the smallest distance of non-adjacent vertices is just slightly larger than 1.}
    \label{fig:BarJointRigidMarbleRigid}
\end{figure}

\begin{figure}[ht]
     \centering
     \includegraphics[height=4cm]{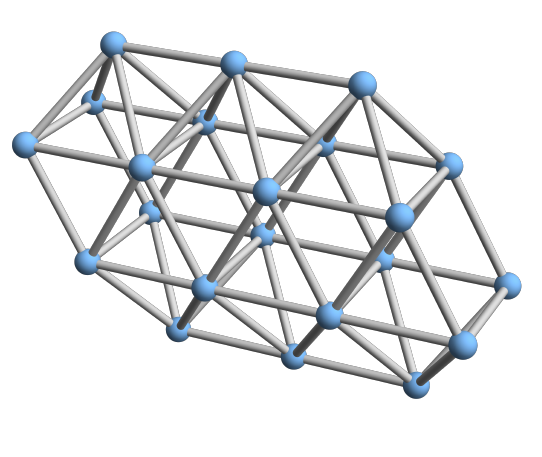}
     \includegraphics[height=4cm]{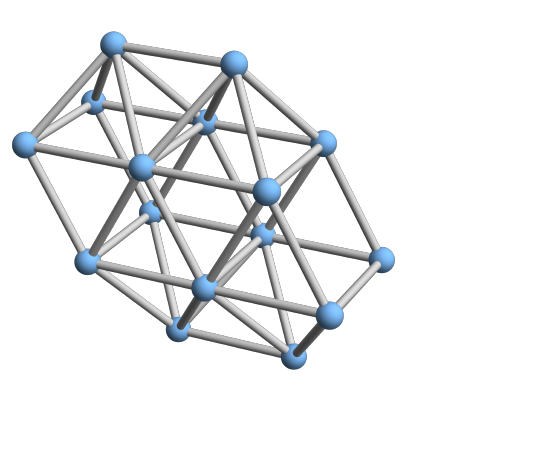}
     \caption{Two marble graphs that are not marble-rigid with differing generic rigidity properties: (left) globally rigid and (right) rigid but not globally rigid.}
     \label{fig:flexmarble}
\end{figure}

\begin{table}[ht!]
    \centering
    \begin{tabular}{|l|p{3.025cm}|p{3.025cm}|p{3.025cm}|}
    \hline
                              & Generically globally rigid & Generically rigid (not GGR) & Generically flexible\\
                               \hline
        Globally penny-rigid  & \Cref{fig:pennybarjoint} (c) & \Cref{fig:pennybarjoint} (d) & \Cref{fig:sparsepenny}\\
        \hline
        Penny-rigid (not GpR) &  OPEN & \Cref{fig:penny-rig-notglob} & \Cref{fig:penny-rig-not-glob}\\
        \hline
        Penny-flexible        & \Cref{fig:pennybarjoint} (b) & \Cref{fig:pennybarjoint} (a) & Path of length $\geq 2$\\
        \hline
    \end{tabular}
    \caption{Comparison of standard rigidity concepts and their variants for penny graphs.}
    \label{tab:barvspenny}
\end{table}

All of these examples are summarised in \Cref{tab:barvspenny} and \Cref{tab:barvsmarble}. As the tables show there is one option which we did not find an example for.
Specifically, it appears to be open whether there exists a generically globally rigid penny/marble graph that is penny/marble-rigid but not globally penny/marble-rigid.

\begin{table}[ht!]
    \centering
    \begin{tabular}{|l|p{3.025cm}|p{3.025cm}|p{3.025cm}|}
    \hline
                              & Generically globally rigid & Generically rigid (not GGR) & Generically flexible\\
                               \hline
        Globally marble-rigid  & \Cref{fig:MarbleGloballyRigid} (left) &  \Cref{fig:MarbleGloballyRigid} (center) & \Cref{fig:MarbleGloballyRigid} (right)\\
        \hline
        Marble-rigid (not GmR) &  OPEN & \Cref{fig:BarJointRigidMarbleRigid} & \Cref{fig:BarJointFlexibleMarbleRigid}\\
        \hline
        Marble-flexible        & \Cref{fig:flexmarble} (left) & \Cref{fig:flexmarble} (right) & Path of length $\geq3$\\
        \hline
    \end{tabular}
    \caption{Comparison of standard rigidity concepts and their variants for marble graphs.}
    \label{tab:barvsmarble}
\end{table}

\section{Conclusions and open problems}

We have initiated a study of rigidity theoretic concepts for penny and marble graphs and leave open many aspects of the theory for the interested reader.

While \Cref{l:chordal} provides a complete description of penny-rigidity and global penny-rigidity for chordal graphs, the general case is open. In the generic framework case, the celebrated results of Pollaczek-Geiringer \cite{Geiringer,Laman} and Jackson and Jord\'an \cite{JacksonJordan} give such characterisations. On the other hand, our characterisation for chordal graphs also applies in higher dimensions, whereas their results apply only in 2-dimensions. Indeed in higher dimensions it is believed to be a very challenging open problem to characterise generic (global) rigidity. Only a few special cases are understood.

A particularly nice instance is that of planar graphs. No planar graph (on at least 5 vertices) is globally rigid in $\mathbb{R}^3$ since they have at most $3|V|-6$ edges, hence they fail a well known necessary condition due to Hendrickson \cite{Hen}. On the other hand, a planar graph (on at least 3 vertices) is rigid in $\mathbb{R}^3$ if and only if it is a triangulation \cite{gluck1975almost}. It is conceivable that the case of planar graphs that are also marble graphs provides a tractable challenge.

\section*{Acknowledgements}
S.\,D.\ was supported by the Heilbronn Institute for Mathematical Research.
G.\,G.\ was partially supported by the Austrian Science Fund (FWF): P31888.
K.\,K.\ was partially supported by the Academy of Finland grant number 323416.
F.\,M.\ was partially supported by the FWO grants G0F5921N and G023721N, 
the KU Leuven iBOF/23/064 grant, and the UiT Aurora MASCOT project.
A.\,N.\ was partially supported by EPSRC grant number EP/W019698/1.

\bibliographystyle{plainurl}
\bibliography{references}

\begin{thebibliography}{10}

\bibitem{AsimowRoth}
Leonard Asimow and Ben Roth.
\newblock The rigidity of graphs.
\newblock {\em Transactions of the American Mathematical Society},
  245:279--289, 1978.
\newblock \href {https://doi.org/10.2307/1998867} {\path{doi:10.2307/1998867}}.

\bibitem{bowen2015realization}
Clinton Bowen, Stephane Durocher, Maarten L{\"o}ffler, Anika Rounds, Andr{\'e}
  Schulz, and Csaba~D T{\'o}th.
\newblock Realization of simply connected polygonal linkages and recognition of
  unit disk contact trees.
\newblock In {\em Graph Drawing and Network Visualization: 23rd International
  Symposium, GD 2015}, pages 447--459. Springer, 2015.
\newblock \href {https://doi.org/10.1007/978-3-319-27261-0_37}
  {\path{doi:10.1007/978-3-319-27261-0_37}}.

\bibitem{cifuentes20233d}
Diego Cifuentes, Jan Draisma, Oskar Henriksson, Annachiara Korchmaros, and Kaie
  Kubjas.
\newblock {3D genome reconstruction from partially phased Hi-C data}, 2023.
\newblock \href {https://arxiv.org/abs/2301.11764} {\path{arXiv:2301.11764}},
  \href {https://doi.org/10.48550/arXiv.2301.11764}
  {\path{doi:10.48550/arXiv.2301.11764}}.

\bibitem{MarbleData}
Sean Dewar, Georg Grasegger, Kaie Kubjas, Fatemeh Mohammadi, and Anthony Nixon.
\newblock Marble graphs with various rigidity properties, 2023.
\newblock [Data set].
\newblock \href {https://doi.org/10.5281/zenodo.8114283}
  {\path{doi:10.5281/zenodo.8114283}}.

\bibitem{EadesWhitesides}
Peter Eades and Sue Whitesides.
\newblock The logic engine and the realization problem for nearest neighbor
  graphs.
\newblock {\em Theoretical Computer Science}, 169(1):23--37, 1996.
\newblock \href {https://doi.org/10.1016/S0304-3975(97)84223-5}
  {\path{doi:10.1016/S0304-3975(97)84223-5}}.

\bibitem{gluck1975almost}
Herman Gluck.
\newblock Almost all simply connected closed surfaces are rigid.
\newblock In {\em Geometric Topology}, pages 225--239. Springer, 1975.

\bibitem{GortlerHealyThurston}
Steven~J. Gortler, Alexander~D. Healy, and Dylan~P. Thurston.
\newblock Characterizing generic global rigidity.
\newblock {\em American Journal of Mathematics}, 132(4):897--939, 2010.
\newblock \href {https://doi.org/10.1353/ajm.0.0132}
  {\path{doi:10.1353/ajm.0.0132}}.

\bibitem{Hen}
Bruce Hendrickson.
\newblock Conditions for unique graph realizations.
\newblock {\em SIAM Journal on Computing}, 21(1):65--84, 1992.
\newblock \href {https://doi.org/10.1137/0221008} {\path{doi:10.1137/0221008}}.

\bibitem{Hlineny}
Petr Hlin{\v{e}}n{\'y}.
\newblock Touching graphs of unit balls.
\newblock In Giuseppe DiBattista, editor, {\em Graph Drawing}, pages 350--358,
  Berlin, Heidelberg, 1997. Springer.
\newblock \href {https://doi.org/10.1007/3-540-63938-1_80}
  {\path{doi:10.1007/3-540-63938-1_80}}.

\bibitem{HolmesCerfon}
Miranda~C. Holmes-Cerfon.
\newblock Enumerating rigid sphere packings.
\newblock {\em SIAM Review}, 58(2):229--244, 2016.
\newblock \href {https://doi.org/10.1137/140982337}
  {\path{doi:10.1137/140982337}}.

\bibitem{JacksonJordan}
Bill Jackson and Tibor Jord{\'a}n.
\newblock Connected rigidity matroids and unique realizations of graphs.
\newblock {\em Journal of Combinatorial Theory, Series B}, 94(1):1--29, 2005.
\newblock \href {https://doi.org/10.1016/j.jctb.2004.11.002}
  {\path{doi:10.1016/j.jctb.2004.11.002}}.

\bibitem{Laman}
Gerard Laman.
\newblock On graphs and rigidity of plane skeletal structures.
\newblock {\em Journal of Engineering Mathematics}, 4:331--340, 1970.
\newblock \href {https://doi.org/10.1007/BF01534980}
  {\path{doi:10.1007/BF01534980}}.

\bibitem{Maxwell}
James~Clerk Maxwell.
\newblock On the calculation of the equilibrium and stiffness of frames.
\newblock {\em The London, Edinburgh, and Dublin Philosophical Magazine and
  Journal of Science}, 27(182):294--299, 1864.
\newblock \href {https://doi.org/10.1080/14786446408643668}
  {\path{doi:10.1080/14786446408643668}}.

\bibitem{patil}
{H.\,P.} Patil.
\newblock On the structure of $k$-trees.
\newblock {\em Journal of Combinatorics, Information \& System Sciences},
  11:57--64, 1986.

\bibitem{Geiringer}
Hilda Pollaczek-Geiringer.
\newblock {{\"U}ber die Gliederung ebener Fachwerke}.
\newblock {\em {Zeitschrift f{\"u}r Angewandte Mathematik und Mechanik}},
  7(1):58--72, 1927.
\newblock \href {https://doi.org/10.1002/zamm.19270070107}
  {\path{doi:10.1002/zamm.19270070107}}.

\end{thebibliography}

\bigskip 

\noindent
\footnotesize \textbf{Authors' addresses:}

\medskip 

\noindent{School of Mathematics, University of Bristol, Bristol, UK}
\hfill\texttt{sean.dewar@bristol.ac.uk}
\\
\noindent{Johann Radon Institute for Computational and Applied Mathematics}  \hfill \texttt{georg.grasegger@ricam.oeaw.ac.at}
\\
\noindent{Department of Mathematics and Systems Analysis, Aalto University} \hfill \texttt{kaie.kubjas@aalto.fi}
\\
\noindent{Departments of Mathematics and Computer Science, KU Leuven, Belgium} 
\hfill \texttt{fatemeh.mohammadi@kuleuven.be}
\\
\noindent{Mathematics and Statistics, 
Lancaster
University, Lancaster,
LA1 4YF, UK}
\hfill \texttt{a.nixon@lancaster.ac.uk}

\end{document}